\def\titre{Locally optimal controllers and globally inverse optimal controllers}
\def\longversion{1}

\if\longversion1
\documentclass[preprint,12pt]{article}
\else
\documentclass[preprint,12pt]{article}
\usepackage{natbib}
\fi

\usepackage{url}
\usepackage{graphicx}
\usepackage{color}
\usepackage{amsfonts,latexsym,amssymb,amsmath}


%
%

\makeatletter
\let\theoremstyle\@undefined                        
\makeatother
\usepackage{amsthm}

\newtheorem{theorem}{Theorem}

\newtheorem{assumption}{Assumption}

\def\RR{{\mathbb R}} 
\def\NN{{\mathbb N}} 
\def\HR{{\mathcal H}} 

\def\XR{\chi} 
\def\AR{\mathbb A} 
\def\BR{\mathbb B} 

\def\Ko{K_o}                

\def\alphao{\alpha_{o}}    





\DeclareMathOperator{\Id}{Id}         

\def\chi{{\mathchoice
{{\mbox{$\scriptstyle \mathcal{X}$}}}
{{\mbox{$\scriptstyle \mathcal{X}$}}}
{{\mbox{$\scriptscriptstyle \mathcal{X}$}}}
{{\mbox{$\scriptscriptstyle \mathcal{X}$}}}
}}

\catcode`\@=11
\def\downparenfill{$\m@th\braceld\leaders\vrule\hfill\bracerd$}
\def\overparen#1{\mathop{\vbox{\ialign{##\crcr\crcr \noalign{\kern0.4ex}
\downparenfill\crcr\noalign{\kern0.4ex\nointerlineskip}
$\hfil\displaystyle{#1}\hfil$\crcr}}}\limits}
\catcode`\@=12

\newcounter{comment}
\newlength{\comwidth}

\usepackage{authblk}
\begin{document}

\title{\titre}
\author[1]{Sofiane Benachour}
\author[2]{Humberto Stein Shiromoto}
\author[1]{Vincent Andrieu}
\affil[1]{
\small Universit\'e Lyon 1, Villeurbanne;
CNRS, UMR 5007, LAGEP.
43 bd du 11 novembre, 69100 Villeurbanne, France {\small  https://sites.google.com/site/vincentandrieu/, ms.benachour@gmail.com}}
\affil[2]{\small GIPSA-lab, Grenoble Campus, 11 rue des Math\'ematiques, BP 46,
38402 Saint Martin d'H\`eres Cedex, France {\small  humberto.shiromoto@ieee.org}}

\maketitle

\begin{abstract}
In this paper we consider the problem of global asymptotic stabilization with prescribed local behavior.
We show that this problem can be formulated in terms of control Lyapunov functions.
Moreover, we show that if the local control law has been synthesized employing a LQ approach, then the associated Lyapunov function can be seen as the value function of an optimal problem with some specific local properties.
We illustrate these results on two specific classes of systems: backstepping and feedforward systems.
Finally, we show how this framework can be employed when considering the orbital transfer problem.
\end{abstract}

\section{Introduction}

The synthesis of a stabilizing control law for systems described by nonlinear differential equations has been the subject of great interest by the nonlinear control community during the last three decades.
Depending on the structure of the model, some techniques are now available to synthesize control laws ensuring global and asymptotic stabilization of the equilibrium point.

For instance, we can refer to the popular backstepping approach (see \cite{KrsticKokotovicKanellakopoulos_Book_95,AndrieuPraly_TAC_08} and the reference therein),
or the forwarding approach (see \cite{MazencPraly_TAC_96,JankovicSepulchre_TAC_96,PralyOrtegaKaliora_TAC_01})  and some others based on energy considerations or dissipativity properties   (see \cite{KokotovicArcak_Aut_01} for a survey of the available approaches).

Although the global asymptotic stability of the steady point can be achieved in some specific cases, it remains difficult to address in the same control objective performance issues of a nonlinear system in a closed loop.
However, when the first order approximation of the non-linear model is considered, some performance aspects can be addressed by using linear optimal control techniques (using LQ controller for instance).

Hence, it is interesting to raise the question of synthesizing a nonlinear control law which guarantees the global asymptotic stability of the origin while ensuring a prescribed local linear behavior.
  This problem has been addressed in \cite{PanEzalKrenerKokotovic_TAC_00_LocOpt}.
  In this paper local optimal control laws are designed for systems which admits the existence of a backstepping.

In the present paper we consider this problem in a general manner.
In a first section we will motivate this control problem and we will consider a first strategy based on the design of a uniting control Lyapunov function.
We will show that this is related to an equivalent problem which is the design of a control Lyapunov function with a specific property on the quadratic approximation around the origin.
In a second part of this paper, we will consider the case in which the prescribed local behavior is an optimal LQ controller. In this framework, we investigate what type of performances is achieved by the control solution to the stabilization with prescribed local behavior.
In a third part we consider two specific classes of systems and show how the control with prescribed local behavior can be solved.
With our new context we revisit partially results obtained in \cite{PanEzalKrenerKokotovic_TAC_00_LocOpt}.
Finally in the   fourth   part of the paper, we consider a specific control problem which is the orbital transfer problem.
Employing the Lyapunov approach of Kellet and Praly in \cite{KellettPraly_NOLCOS_04_ContThrustOrbTransfer} we will exhibit a class of costs for which the stabilization with local optimality can be achieved.



\section{Stabilization with prescribed local behavior}
To present the problem under consideration, we introduce a general controlled nonlinear system described by the following ordinary differential equation:
\begin{equation}
\label{eq_system_gen} \dot \XR = \Phi(\XR,u)\ ,
\end{equation}
with the state $\XR$ in $\RR^n$ and $\Phi:\RR^n\times \RR^p\rightarrow\RR^n$ is a $C^1$ function such that $\Phi(0,0)=0$ and $u$ in $\RR^p$ is a control input.
For this system, we can introduce the two matrices $\AR$ in $\RR^{n\times n}$ and $\BR$ in $\RR^{n\times p}$ describing its first order approximation~:
$
\AR:=\frac{\partial \Phi}{\partial \XR}(0,0)\ ,\ \BR:=\frac{\partial \Phi}{\partial u}(0,0)\ .
$
All along the paper hidden in our assumptions, the couple $(\AR,\BR)$ is assumed to be stabilizable.

For system (\ref{eq_system_gen}), the problem we intend to solve can be described as follows:\\[0.5em]
\noindent\textbf{Global asymptotic stabilization with prescribed local behavior:}
Let a linear state feedback law $u=\Ko \XR$ with $\Ko$ in $\RR^{p\times n}$ which stabilizes the first order approximation of system  (\ref{eq_system_gen}) (i.e. $\AR+\BR \Ko$ is Hurwitz) be given.
We are looking for a stabilizing control law $u=\alphao(\XR)$, with $\alphao:\RR^{n}\rightarrow\RR^p$, a locally Lipschitz map differentiable at $0$ such that:
\begin{enumerate}
\item  The origin of the closed-loop system $\dot \XR = \Phi(\XR,\alphao(\XR))$
is globally and asymptotically stable ;

\item The first order approximation of the control law $\alphao$ satisfies the following equality.
\begin{equation}\label{eq_partial_x}
\frac{\partial \alphao}{\partial \XR}(0)=K_o
\ .
\end{equation}
\end{enumerate}

This problem has already been addressed in the literature.
For instance, it is the topic of the papers  \cite{PanEzalKrenerKokotovic_TAC_00_LocOpt,Sahnounetal_IJC_12,BenachourEtAl_CDC_11}.
Note moreover that this subject can be related to the problem of uniting a local and a global control laws as introduced in \cite{TeelKapoor_ECC_97} (see also \cite{Prieur_MCSS_01}).

In this paper, we restrict our attention to the particular case in which the system is input affine.
More precisely we consider systems in the form
\begin{equation}\label{eq_syst_Inputaffine}
\dot \XR = a(\XR) + b(\XR)u\ ,
\end{equation}
with the two $C^1$ functions $a:\RR^n\rightarrow\RR^n$ and $b:\RR^n\rightarrow\RR^{n\times p}$.
In this case we get $\AR = \frac{\partial a}{\partial \XR}(0)$ and $\BR = b(0)$.

Employing the tools developed in \cite{AndrieuPrieur_TAC_10} it is possible to show that merging control Lyapunov function may solve the problem of stabilization with prescribed local behavior.
In the following, we show that working with the control Lyapunov function is indeed equivalent to address this problem.
\begin{theorem}\label{Theo_UnitingLyap}
Given a  linear state feedback law $u=\Ko\XR$ with $\Ko$ in $\RR^{p\times n}$ which stabilizes the first order approximation of system  (\ref{eq_syst_Inputaffine}).
 The following two statements are equivalent.
\begin{enumerate}
\item There exists a locally Lipschitz function $\alphao:\RR^n\rightarrow\RR^p$ solution to the \textit{global asymptotic stabilization with prescribed local behavior problem}.

\item There exists a $C^2$ proper, positive definite function $V:\RR^n\rightarrow\RR_+$ such that the following two properties are satisfied.
\begin{itemize}
\item
If we denote\footnote{In the following, given a $C^2$ function $V:\RR^n\rightarrow\RR$, the notation
$H(V)(\XR)$ is the Hessian matrix in $\RR^{n\times n}$ evaluated at $\XR$ of the function $V$. More precisely, it is the matrix
$
(H(V))_{i,j}(\XR) = \frac{\partial^2 V}{\partial \XR_i\partial \XR_j}(\XR)\ .
$
} $P:=\frac{1}{2}H(V)(0)$, then $P$ is a positive definite matrix.
Moreover this inequality holds.
\begin{equation}\label{eq_Local_AlgebraicLyap}
(\AR+\BR\Ko)'P + P(\AR+\BR\Ko)<0\ ;
\end{equation}

\item Artstein condition is satisfied. More precisely, this implication holds for all $\XR$ in $\RR^n\setminus\{0\}$,
\begin{equation}\label{ArtsteinCondition}
L_bV(\XR)=0\Rightarrow L_aV(\XR)<0,
\end{equation}
  where $L_bV(\cdot)=\partial V/\partial \chi\cdot b(\cdot)$, and $L_aV$ is analogously defined.

\end{itemize}
\end{enumerate}
\end{theorem}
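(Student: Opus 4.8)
The natural strategy is to prove the two implications separately, with the implication ($2 \Rightarrow 1$) being essentially an application of Artstein's theorem / Sontag's universal formula combined with a careful local analysis, and the implication ($1 \Rightarrow 2$) requiring the construction of a control Lyapunov function from a known stabilizing feedback together with a uniting argument. I would begin with ($2 \Rightarrow 1$): given the $C^2$ proper positive definite $V$ satisfying Artstein's condition \eqref{ArtsteinCondition}, Sontag's universal formula produces a feedback $\alpha_{\mathrm{univ}}$ that is locally Lipschitz away from the origin and renders $V$ a Lyapunov function for the closed loop, hence globally asymptotically stabilizes the origin. The subtlety is that this formula need not yield the prescribed derivative $K_o$ at the origin, so one must \emph{modify} $\alpha_{\mathrm{univ}}$ near $0$. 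The idea is to use the local inequality \eqref{eq_Local_AlgebraicLyap}: since $(\AR+\BR\Ko)'P + P(\AR+\BR\Ko) < 0$ and $P = \frac12 H(V)(0)$, the feedback $u = \Ko\XR$ makes $\dot V < 0$ in a punctured neighborhood of the origin. One then patches $u = \Ko\XR$ (valid near $0$) with $\alpha_{\mathrm{univ}}$ (valid far from $0$) by a smooth convex combination, exactly as in the uniting-CLF construction of \cite{AndrieuPrieur_TAC_10}; on the overlap annulus both candidate controls decrease $V$, so any convex combination does too, and the resulting $\alpha_o$ is locally Lipschitz, differentiable at $0$ with $\partial\alpha_o/\partial\XR(0) = \Ko$, and globally stabilizing.

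For the converse ($1 \Rightarrow 2$), I would start from a solution $\alpha_o$ to the stabilization-with-prescribed-local-behavior problem. By a converse Lyapunov theorem (Massera / Kurzweil), the globally asymptotically stable closed loop $\dot\XR = a(\XR) + b(\XR)\alpha_o(\XR)$ admits a smooth proper positive definite Lyapunov function $W$. The difficulty is that $W$ need not be $C^2$ with the right Hessian at the origin, and need not satisfy \eqref{eq_Local_AlgebraicLyap}. To fix the local behavior, I would solve the Lyapunov equation $(\AR+\BR\Ko)'P + P(\AR+\BR\Ko) = -Q$ for some $Q > 0$, obtaining a positive definite $P$, and then build $V$ by uniting the local quadratic $\XR'P\XR$ with the global $W$ — again via the merging-Lyapunov-function machinery of \cite{AndrieuPrieur_TAC_10}. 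Near the origin, along the closed loop with $u = \alpha_o(\XR) = \Ko\XR + o(|\XR|)$, the quadratic $\XR'P\XR$ decreases by construction; far from the origin $W$ decreases; so the united $V$ is a Lyapunov function for the closed loop, in particular $L_aV + L_bV\,\alpha_o < 0$ on $\RR^n\setminus\{0\}$, which immediately gives Artstein's condition \eqref{ArtsteinCondition}, while its Hessian at $0$ is $2P$ by construction, yielding \eqref{eq_Local_AlgebraicLyap}.

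The main obstacle, in both directions, is the \emph{gluing step}: ensuring that the local object (the quadratic, resp. the linear feedback $\Ko\XR$) and the global object (the converse-Lyapunov $W$, resp. Sontag's $\alpha_{\mathrm{univ}}$) can be merged without destroying either the $C^2$ regularity and Hessian value at $0$, or the strict decrease of $V$ on the overlap region. This is precisely the content of the uniting/merging results invoked from \cite{AndrieuPrieur_TAC_10}, and the proof should reduce to checking that their hypotheses hold here — chiefly that the two Lyapunov functions are "compatible" in the sense that on the overlap annulus one can interpolate while preserving negativity of the derivative. A secondary technical point is guaranteeing that the constructed $\alpha_o$ is genuinely differentiable (not merely Lipschitz) at $0$ with derivative exactly $\Ko$; this follows because on a full neighborhood of $0$ we have set $\alpha_o(\XR) = \Ko\XR$ outright, so differentiability there is immediate.
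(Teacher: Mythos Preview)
Your proposal is correct and follows essentially the same route as the paper's proof: for $1)\Rightarrow 2)$ the paper also takes a converse Lyapunov function $V_\infty$ (Kurzweil), a local quadratic $V_0=\XR'P\XR$ with $P$ solving \eqref{eq_Local_AlgebraicLyap}, observes both decrease along the closed loop with $\alpha_o$ near the origin, and invokes \cite[Theorem~2.1]{AndrieuPrieur_TAC_10} to merge them; for $2)\Rightarrow 1)$ the paper likewise shows $u=\Ko\XR$ makes $\dot V<0$ on a sublevel set $\{0<V\le r_0\}$, takes Sontag's universal feedback $\alpha_\infty$, and blends them via $\alpha_o=\rho(V)\alpha_\infty+(1-\rho(V))\Ko\XR$ with a cutoff $\rho$, so that $\alpha_o\equiv\Ko\XR$ near $0$. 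The only cosmetic difference is that the paper phrases the blending in terms of the sublevel sets of $V$ (and notes the small control property in passing), whereas you describe it as a convex combination on an overlap annulus; the content is the same.
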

\if\longversion1
\begin{proof}
\noindent $1)\Rightarrow 2)$
The proof of this part of the theorem is based on recent results obtained in \cite{AndrieuPrieur_TAC_10}.
Indeed, the design of the function $V$ is obtained from the uniting of a quadratic local control Lyapunov function (denoted $V_0$) and a global control Lyapunov function (denoted $V_\infty$) obtained employing a converse Lyapunov theorem.

First of all, employing the converse Lyapunov theorem  of Kurzweil in \cite{Kurzweil_AMST_56}, there exists a $C^\infty$ function $V_\infty:\RR^n\rightarrow\RR_+$ such that
$
\frac{\partial V_\infty}{\partial \XR}(\XR)[a(\XR)+b(\XR)\alpha_o(\XR)]<0\ ,\ \forall \ \XR\neq 0\ .
$
On the other hand, $\AR+\BR \Ko$ being Hurwitz, there exists a matrix $P$ such that the algebraic Lyapunov inequality (\ref{eq_Local_AlgebraicLyap}) is satisfied.
Let $V_0$ be the quadratic function $V_0(\XR) = \XR'P \XR$.
Due to the fact that $\Ko$ satisfies equation (\ref{eq_partial_x}) it yields that the matrix $\AR+\BR \Ko$ is the first order approximation of the system  (\ref{eq_syst_Inputaffine}) with the control law $u=\alphao(x)$.
 Consequently, it implies that there exists a positive real number $\epsilon_1$ such that
$
\frac{\partial V_0}{\partial \XR}(\XR)[a(\XR)+b(\XR)\alpha_o(\XR)]<0\ ,\ \forall \ 0<|\XR|\leq \epsilon_1\ .
$
This implies that the time derivative of the two control Lyapunov functions $V_0$ and $V_\infty$ can be made negative definite with the same control law in a neighborhood of the origin.
Employing \cite[Theorem 2.1]{AndrieuPrieur_TAC_10}, it yields the existence of a function $V:\RR^n\rightarrow\RR_+$ which is  $C^2$ at the origin and a positive real number $\epsilon_2$ such that the following two properties hold.
\begin{itemize}
\item For all $\XR$ in $\RR^n\setminus\{0\}$,
$
\frac{\partial V}{\partial \XR}(\XR)[a(\XR)+b(\XR)\alpha_o(\XR)] <0\ .
$
Hence,   Equation   (\ref{ArtsteinCondition}) is satisfied\ ;
\item For all $\XR$ in $\RR^n$ such that $|\XR|\leq\epsilon_2$, we have
$
V(\XR) = V_0(\XR)\ .
$
Consequently $\HR(V)(0) = 2P$.
\end{itemize}
\noindent $2)\Rightarrow 1)$
Let $Q$ be the positive definite matrix defined as,
$
Q:=-(\AR+\BR K_o)'P + P(\AR+\BR K_o)\ .
$
Employing the local approximation of the Lyapunov function $V$, it is possible to find
$r_0$ such that
$$
L_aV(\XR) + L_bV(\XR)\Ko \XR<0\ ,\ \forall \XR\in\{0<V(\XR)\leq r_0\}\ .
$$
This implies that the control Lyapunov function $V$ satisfies the small control property (see \cite{Sontag_SCL_89}).
Hence,  we get the existence of a control law $\alpha_\infty$ (given by Sontag's universal formulae introduced in \cite{Sontag_SCL_89}) such that this one satisfies for all $\XR\neq 0$
$$
L_aV(\XR) + L_bV(\XR)\alpha_{  \infty  }(\XR)<0\ .
$$
A solution to the stabilization with prescribed local problem can be given by the control law
$
\alphao(\XR) = \rho(V(\XR))\alpha_\infty(\XR) + (1-\rho(V(\XR)))\Ko \XR
$
where $\rho:\RR_+\rightarrow [0,1]$ is any locally Lipschitz function such that
$
\rho(s) = \left\{
\begin{array}{cc}
0\ ,\ &s\leq \frac{r_0}{2}\ ,\\
1\ ,\ &s\geq r_0\ .\\
\end{array}
\right.
$
Note that with this selection, it yields that equality (\ref{eq_partial_x}) holds.
Moreover, we have along the solution of the system (\ref{eq_syst_Inputaffine})
\\[0.5em]$\displaystyle
\left.\dot V(\XR)\right|_{u=\alphao(\XR)}=
\rho(V(\XR))\left.\dot V(\XR)\right|_{u=\alpha_\infty}\displaystyle
+ (1-\rho(V(\XR)))\left.\dot V(\XR)\right|_{u=\Ko \XR}<0
$\\[0.5em]
Hence, we get the result.
\end{proof}
\else
This proof is based on the uniting of control Lyapunov function as developed in \cite{AndrieuPrieur_TAC_10}. It can be found in the long version of this paper in \cite{BenachourAndrieu_HAL_13}.
\fi

From this theorem, we see that looking for a global control Lyapunov function locally assigned by the prescribed local behavior and looking for the controller itself are equivalent problems.

\section{Locally optimal and globally inverse optimal control laws}

If one wants to guarantee a specific behavior on the closed loop system, one might want to find a control law which minimizes a specific cost function.
More precisely, we may look for a stabilizing control law which minimizes the criterium
\begin{equation}\label{eq_cost}
J(\XR;u) = \displaystyle\int_0^{+\infty}
q(X(\XR,t;u))+u(t)'r(X(\XR,t;u))u(t)
dt,
\end{equation}
where $X(\XR,t;u)$ is the solution of the system (\ref{eq_syst_Inputaffine}) initiated from $\XR_0=\XR$ at $t=0$ and employing the control $u:\RR_+\rightarrow\RR^p$, $q:\RR^n\rightarrow\RR_+$ is a continuous function and $r$ is a continuous function which values $r(\XR)$ are symmetric positive definite matrices.

The control law which solves this minimization problem (see \cite{SepulchreJankovic_Book_97}) is given as a state feedback
\begin{equation}\label{eq_OptCont}
u = -\frac{1}{2}r(\XR)^{-1}L_bV(\XR)'\ ,
\end{equation}
where $V:\RR^n\rightarrow\RR_+$ is the solution with $V(0)=0$ to the following Hamilton-Jacobi-Bellman equation for all $\XR$ in $\RR^n$
\begin{equation}\label{eq_HJB}
q(\XR) + L_aV(\XR) - \frac{1}{4}L_bV(\XR)r(\XR)^{-1}L_bV(\XR)'=0\ .
\end{equation}

Given a function $q$ and a function $r$, it is in general difficult or impossible to solve the so called HJB equation.
However, for linear system, this might be solved easily.
If we consider the first order approximation of the system (\ref{eq_syst_Inputaffine}), and
given a positive definite matrix $R$ and a positive semi definite matrix $Q$ we can introduce the quadratic cost:
\begin{equation}\label{eq_cost_quadra}
J(\XR;u) = \displaystyle\int_0^{+\infty}
\left[
X(\XR,t;u)'Q X(\XR,t;u) + u(t)'Ru(t)
\right]dt,
\end{equation}

In this context, solving the HJB equation can be rephrased in solving the   algebraic Riccati   equation given as
\begin{equation}\label{eq_HJB_quadra}
P\AR+\AR'P-P\BR R^{-1}\BR'P+Q=0\ .
\end{equation}

It is well known that provided, the couple $(\AR,\BR)$ is controllable, it is possible to find a solution to this equation.
Hence, for the first order approximation, it is possible to solve the optimal control problem when considering a cost in the form of (\ref{eq_cost_quadra}).

From this discussion, we see that an interesting control strategy is to solve the stabilization with prescribed local behavior with the local behavior obtained solving LQ control strategy.
Note however that once we have solved this problem, one may wonder what type of performances has been achieved by this new control law.
The following Theorem addresses this point and is inspired from \cite{SepulchreJankovic_Book_97} (see also \cite{Praly_Poly_08}).
Following Theorem  \ref{Theo_UnitingLyap}, this one is given in terms of control Lyapunov functions.
\begin{theorem}[Local optimality and global inverse optimality]\label{Theo_LocOptStab}
Given two positive definite matrices $R$ and $Q$.
Assume there exists a $C^2$ proper positive definite function $V:\RR^n\rightarrow\RR_+$
such that the following two properties hold.
\begin{itemize}
\item
The matrix $P:=H(V)(0)$ is positive definite matrix and satisfies the following equality.
\begin{equation}
P\AR+\AR'P-P\BR R^{-1}\BR'P+Q=0\ ;
\end{equation}

\item   Equation \eqref{ArtsteinCondition}   is satisfied.
\end{itemize}
Then there exist
$q:\RR^n\rightarrow\RR_+$ a continuous function, $C^2$ at zero and $r$ a continuous function   whose   values $r(\XR)$ are symmetric positive definite matrices
such that the following properties are satisfied.
\begin{itemize}
\item The function $q$ and $r$ satisfy
\begin{equation}\label{eq_LocOpt}
H(q)(0)=2Q\ ,\ r(0)=R\ ;
\end{equation}
\item The function $V$ is a value function associated to the cost (\ref{eq_cost}). More precisely, $V$ satisfies the HJB equation (\ref{eq_HJB}).
\end{itemize}
\end{theorem}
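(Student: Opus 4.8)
The plan is to read the Hamilton--Jacobi--Bellman equation~(\ref{eq_HJB}) backwards. Once a continuous weight $r$ with symmetric positive definite values is fixed, (\ref{eq_HJB}) becomes a \emph{definition} of the running cost,
\[
q(\XR):=-L_aV(\XR)+\tfrac14\,L_bV(\XR)\,r(\XR)^{-1}L_bV(\XR)'\ ,
\]
which is automatically continuous, and satisfies $q(0)=0$ since $\nabla V(0)=0$. So the whole statement reduces to choosing $r$ so that, in addition, (i)~$q(\XR)\ge 0$ for all $\XR$, (ii)~$q$ is twice differentiable at $0$ with $H(q)(0)=2Q$, and (iii)~$r(0)=R$.

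First I would pin down the local behaviour by imposing $r\equiv R$ on a ball $B_\delta$ around the origin. There $q(\XR)=-L_aV(\XR)+\tfrac14 L_bV(\XR)R^{-1}L_bV(\XR)'$, and a second-order expansion at the origin --- using that $V$ is $C^2$, that $a,b$ are $C^1$ with $a(0)=0$, $\partial a/\partial\XR(0)=\AR$ and $b(0)=\BR$, together with the quadratic approximation of $V$ carried by $H(V)(0)$ --- gives $q(\XR)=\XR'Q\XR+o(|\XR|^2)$; the cancellation that produces exactly $Q$ is \emph{precisely} the algebraic Riccati equation in the hypotheses. This settles (ii), and since $Q$ is positive definite, after shrinking $\delta$ one also has $q(\XR)>0$ for $0<|\XR|\le\delta$.

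The remaining and main point is nonnegativity of $q$ away from that ball, and this is where Artstein's condition~(\ref{ArtsteinCondition}) is used. Set $W(\XR):=L_bV(\XR)R^{-1}L_bV(\XR)'\ge 0$ and look for $r$ of the form $r(\XR)=\lambda(\XR)^{-1}R$ with a scalar function $\lambda$ that is continuous, $\lambda\ge 1$, and $\lambda\equiv 1$ on $B_{\delta/2}$; then $r$ has symmetric positive definite values, $r(0)=R$, and $q=-L_aV+\tfrac{\lambda}{4}W$. By~(\ref{ArtsteinCondition}), every $\XR\neq 0$ with $L_bV(\XR)=0$ has $L_aV(\XR)<0$, hence the closed set $F:=\{|\XR|\ge\delta/2\}\cap\{L_aV\ge 0\}$ lies in $\{L_bV\neq 0\}$, where $W>0$; so $\XR\mapsto 4L_aV(\XR)/W(\XR)$ is continuous and nonnegative on $F$. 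Extending it by Tietze to a continuous nonnegative $g$ on $\RR^n$ and setting $\lambda:=1+\eta\,g$ for a continuous cutoff $\eta:\RR^n\to[0,1]$ equal to $0$ on $B_{\delta/4}$ and to $1$ outside $B_{\delta/2}$, one checks case by case that $q>0$ off the origin: on $F$, $q=-L_aV+\tfrac14(1+\tfrac{4L_aV}{W})W=\tfrac14 W>0$; on $\{|\XR|\ge\delta/2\}\setminus F$, $L_aV<0$ and $\lambda W\ge 0$, so $q>0$; on $B_\delta\setminus\{0\}$, $q\ge -L_aV+\tfrac14 W>0$ by the previous step. Thus $q\ge 0$ everywhere, with equality only at $0$, and $(q,r)$ is the desired pair. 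One may finally observe that the induced feedback $u^\star=-\tfrac12 r(\XR)^{-1}L_bV(\XR)'$ gives $\dot V=-q(\XR)-\tfrac{\lambda(\XR)}{4}W(\XR)<0$ for $\XR\neq 0$, so, $V$ being proper and positive definite, the origin is globally asymptotically stabilized by $u^\star$ and $V$ is genuinely the optimal value of~(\ref{eq_cost}).

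The delicate step is this last gluing: $r^{-1}$ must be amplified enough to dominate any positivity of $L_aV$, while $r$ stays continuous, symmetric positive definite, and exactly equal to $R$ near the origin. Artstein's condition is exactly what rules out the only genuine obstruction --- $L_aV\ge0$ together with $L_bV=0$ --- so that a possibly large scalar gain $\lambda$ suffices and no change of shape of $r$ is needed.
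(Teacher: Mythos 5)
Your proof is correct and follows essentially the same route as the paper's: read the HJB equation backwards to define $q$, take $r(\XR)$ to be a scalar rescaling of $R$ that equals $R$ near the origin so that the Riccati cancellation yields $H(q)(0)=2Q$, and use Artstein's condition \eqref{ArtsteinCondition} to make the gain large enough to render $q$ positive away from the origin. The only difference is technical rather than conceptual: the paper constructs the gain as $\mu(V(\XR))$ via a compactness/contradiction argument on the level shells $\{kr_0\le V(\XR)\le (k+1)r_0\}$, whereas you build it pointwise by a Tietze extension of $4L_aV/W$ off the closed set where $L_aV\ge 0$; both constructions deliver the same conclusion.
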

\if\longversion1
\begin{proof}
This proof is inspired from some of the results of \cite{Praly_Poly_08}.

First of all, there exists a positive real number $r_0$ such that for all $\XR$ such that $0<V(\XR)\leq r_0$ we have
$
-L_fV(\XR) +\frac{1}{4} L_gV(\XR)R^{-1}L_gV(\XR)'>0\ .
$
Now, for all $k$ in $\NN$, we consider $C_k$ the subset of $\RR^n$ defined as
$
C_k = \{\XR, kr_0\leq V(\XR)\leq (k+1)r_0\}\ .
$
Note that since the function $V$ is proper, for all $k$ the set $C_k$ is a compact subset.
Assume for the time being that for all $k$ there exists $\ell_k$ in $\RR_+$ such that~:
\begin{equation}\label{eq_CLF_k}
L_aV(\XR) -\frac{\ell_k}{4} L_bV(\XR)R^{-1}L_gV(\XR)'<0\ ,\ \forall \XR\in C_k\ .
\end{equation}
Let $\mu$ be any continuous function such that,
$$
\mu(s)
\left\{
\begin{array}{ll}
=1\ ,\ &s\leq \frac{r_0}{2}\ ,\\
\geq 1\ ,\ &\frac{r_0}{2}\leq s\leq r_0\ ,\\
\geq \ell_k\ ,\ &kr_0\leq s\leq (k+1)r_0\ .
\end{array}
\right.
$$
Moreover, let
$
r(\XR):= \frac{1}{\mu(V(\XR))}R\ ,
$
and
$
q(\XR):=-L_aV(\XR) +\frac{1}{4} L_bV(\XR)r(\XR)^{-1}L_bV(\XR)'\ .
$
With (\ref{eq_CLF_k}) and the definition of $\mu$, it yields,
$
q(\XR)>0\ ,\ \forall \XR\neq 0\ .
$
Hence, $V$ is solution to the associated HJB equation.
Note moreover that we have $r(0)=R$ and
$
\frac{1}{2}H(q)(0)=\AR'P+P\AR - P\BR R^{-1}\BR 'P = Q\ .
$
Hence, the result.

In conclusion, to get the result, we only need to show that for all $k$ in $\NN$, there exists $\ell_k$ such that (\ref{eq_CLF_k}) is satisfied.
Assume this is not the case for a specific $k$ in $\NN$.
This implies that for all $j$ in $\NN$ there exists $x_j$ in $C_k$ such that
$
L_aV(\XR_j) -\frac{j}{4} L_bV(\XR_j)R^{-1}L_bV(\XR_j)'\geq 0\ .
$
The sequence $x_j$ being in a compact set, we know there exists a converging subsequence denoted $\left(\XR_{j_\ell}\right)_{\ell\in\NN}$ which converges toward a cluster point denoted $\XR^*$ in $C_k$.
The previous inequality can be rewritten as:
$
\frac{L_aV(\XR_{j_\ell})}{j_\ell} \geq \frac{1}{4} L_bV(\XR_{j_\ell})R^{-1}L_bV(\XR_{j_\ell})'\geq 0\ .
$
Letting $j_\ell$ goes to infinity yields
$
L_aV(\XR^*)\geq 0$ and $L_bV(\XR^*)=0$.
With \eqref{ArtsteinCondition}, this implies that $L_aV(\XR^*)<0$ hence a contradiction.
This ends the proof.
\end{proof}
\else
This proof is inspired from some of the results of \cite{Praly_Poly_08} and can be found in the long version of this paper \cite{BenachourAndrieu_HAL_13}.
\fi

This Theorem establishes that if we solve the stabilization with a prescribed local behavior, we may design a control law $u=\alphao(\XR)$ such that this one is solution to an optimal control problem and such that the local approximation of the associated cost is exactly the one of the local system.
This framework has already been studied in the literature in \cite{PanEzalKrenerKokotovic_TAC_00_LocOpt}. In this paper is addressed the design of a backstepping with a prescribed local optimal control law.
In our context we get a Lyapunov sufficient condition to design a globally and asymptotically stabilizing optimal control law with prescribed local cost function.



\section{Some sufficient conditions}
In this section we give some sufficient conditions allowing us to solve the stabilization with prescribed local behavior problem.
The first result is obtained from the tools developed in \cite{AndrieuPrieur_TAC_10}.
It assumes the existence of a global control Lyapunov function and a sufficient condition is given in terms of a matrix inequality.
In the second and third results we give some structural conditions on the vector field to avoid a matrix inequality.
%

\subsection{Based on matrix inequalities }
\label{SecLMI}

The first solution to solve the stabilization with prescribed local behavior is to follow the result of \cite{AndrieuPrieur_TAC_10} and to assume that there exists a global control Lyapunov function which can be modified locally in order to fit in the context of Theorem \ref{Theo_UnitingLyap}.

\begin{assumption}\label{ass_GlobalCLF}
There exists a positive definite and $C^2$ function $V_\infty:\RR^n\rightarrow\RR_+$ such that the following holds.
\begin{enumerate}
\item The implication (\ref{ArtsteinCondition}) is satisfied.
\item The function $V_\infty$ is locally quadratic. i.e. $P_\infty = H(V)(0)$ is a positive definite matrix.
\end{enumerate}
\end{assumption}

In this context the result obtained from \cite{AndrieuPrieur_TAC_10} may be formalized as follows.
\begin{theorem}(\cite{AndrieuPrieur_TAC_10})
Let Assumption \ref{ass_GlobalCLF} be satisfied.
Let $\Ko$ in $\RR^{p\times n}$ be a matrix such that $\AR + \BR\Ko$
is Hurwitz with $\AR$ and $\BR$ defined in (\ref{eq_FirstOrdeBackstepping}).
If there exists $K_u$ in $\RR^{p\times n}$ and a positive definite matrix $P$ in $\RR^{n\times n}$  such that these matrix inequalities are satisfied
\begin{equation}\label{MatrixEq}
\begin{array}{rcc}
(\AR + \BR \Ko)'P + P(\AR + \BR \Ko)&<&0\ ,\\
(\AR + \BR K_u)'P + P(\AR + \BR K_u)&<&0\ ,\\
(\AR + \BR K_u)'P_\infty + P_\infty(\AR + \BR K_u)&<&0\ ,
\end{array}
\end{equation}
then there exists a smooth function $\alphao:\RR^n\rightarrow\RR^p$ which solves the global asymptotic stabilization with prescribed local  behavior.
\end{theorem}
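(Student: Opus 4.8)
The plan is to deduce the statement from the equivalence in Theorem~\ref{Theo_UnitingLyap}: I will build, out of the prescribed quadratic local data encoded in $P$ and $\Ko$ together with the global control Lyapunov function $V_\infty$ of Assumption~\ref{ass_GlobalCLF}, a single proper positive definite function $V$, of class $C^2$ at the origin, which coincides with the quadratic $\XR'P\XR$ near the origin and which satisfies Artstein's condition \eqref{ArtsteinCondition} on all of $\RR^n\setminus\{0\}$. Once such a $V$ is available, item~2 of Theorem~\ref{Theo_UnitingLyap} holds — its first bullet being exactly positive definiteness of $P$ together with the first inequality in \eqref{MatrixEq} — and the implication $2)\Rightarrow 1)$ of that theorem furnishes the controller.

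First I would set $V_0(\XR):=\XR'P\XR$. Since $\AR+\BR\Ko$ is the first-order approximation of \eqref{eq_syst_Inputaffine} in closed loop with $u=\Ko\XR$, the first inequality in \eqref{MatrixEq} yields $\epsilon_1>0$ with $L_aV_0(\XR)+L_bV_0(\XR)\Ko\XR<0$ for $0<|\XR|\le\epsilon_1$; in particular $V_0$ is a local control Lyapunov function and its Hessian $2P$ satisfies \eqref{eq_Local_AlgebraicLyap}. Next I would exhibit a \emph{common} local controller for $V_0$ and $V_\infty$: the derivative of $V_0$ (respectively of $V_\infty$) along the closed loop with the linear feedback $u=K_u\XR$ equals, to first order in $\XR$, the quantity $\XR'[(\AR+\BR K_u)'P+P(\AR+\BR K_u)]\XR$ (respectively $\XR'[(\AR+\BR K_u)'P_\infty+P_\infty(\AR+\BR K_u)]\XR$), so the second and third inequalities in \eqref{MatrixEq} provide $\epsilon_u>0$ such that the single feedback $u=K_u\XR$ renders both $L_aV_0(\XR)+L_bV_0(\XR)K_u\XR$ and $L_aV_\infty(\XR)+L_bV_\infty(\XR)K_u\XR$ negative for $0<|\XR|\le\epsilon_u$.

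This is precisely the input required by the uniting construction: $V_0$ is a local CLF, $V_\infty$ is a global CLF (Assumption~\ref{ass_GlobalCLF}), and the two admit a common stabilizing controller on an annular region around the origin. Invoking \cite[Theorem~2.1]{AndrieuPrieur_TAC_10} then produces a proper positive definite $V:\RR^n\to\RR_+$, of class $C^2$ at the origin, together with $\epsilon_2>0$, such that $V(\XR)=\XR'P\XR$ for $|\XR|\le\epsilon_2$ — hence $H(V)(0)=2P$ and \eqref{eq_Local_AlgebraicLyap} holds — and such that $V$ is a global CLF, i.e. \eqref{ArtsteinCondition} holds for all $\XR\in\RR^n\setminus\{0\}$. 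Thus item~2 of Theorem~\ref{Theo_UnitingLyap} is satisfied, and its implication $2)\Rightarrow 1)$ yields a locally Lipschitz $\alphao$ solving the global asymptotic stabilization with prescribed local behavior; the additional smoothness asserted in the statement is obtained exactly as in that proof, by patching a Sontag-type controller $\alpha_\infty$ built from $V$ with the linear feedback $\Ko\XR$ through a smooth function $\rho$, which preserves both $\frac{\partial\alphao}{\partial\XR}(0)=\Ko$ and the global decrease of $V$.

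The step I expect to be the main obstacle is the careful verification that the hypotheses of \cite[Theorem~2.1]{AndrieuPrieur_TAC_10} are met to the letter: that the common-controller property really holds on a genuine overlap of the region where $V_0$ is a valid local CLF and the region relevant to $V_\infty$ (this is where all three inequalities of \eqref{MatrixEq}, and not merely the first two, enter), and that the uniting procedure can be arranged so that $V$ is left \emph{exactly} equal to $\XR'P\XR$ near the origin, so that the prescribed gain $\Ko$ is reproduced exactly through \eqref{eq_partial_x} rather than only to first order. The rest is the same linearization-and-compactness bookkeeping already used in the proofs of Theorems~\ref{Theo_UnitingLyap} and~\ref{Theo_LocOptStab}.
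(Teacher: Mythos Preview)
Your proposal is correct and follows essentially the same approach as the paper, whose proof consists of the single sentence that the result is a direct consequence of the tools developed in \cite{AndrieuPrieur_TAC_10}. You have simply unpacked that sentence: the three inequalities in \eqref{MatrixEq} are exactly what is needed so that $V_0(\XR)=\XR'P\XR$ and $V_\infty$ share the common local controller $u=K_u\XR$, which is the hypothesis of the uniting-CLF theorem of \cite{AndrieuPrieur_TAC_10}; the conclusion then feeds directly into item~2 of Theorem~\ref{Theo_UnitingLyap}.
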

\begin{proof}
The proof of this result is a direct consequence of the tools developed in \cite{AndrieuPrieur_TAC_10}.
\end{proof}

In inequalities (\ref{MatrixEq}), $P$ and $K_u$ are the unknown. This implies that this inequality is not linear.
However by introducing some new variables, it is possible to give a (conservative) linear relaxation which allows the use of the tools devoted to solve linear matrix inequalities (see \cite{AndrieuPrieurTarbouriechArzelier_SCL_10} for instance).

\subsection{Strict feedback form}

Following the work of \cite{PanEzalKrenerKokotovic_TAC_00_LocOpt}, consider the case in which system (\ref{eq_syst_Inputaffine}) with state $\XR=(y,x)$ can be written in the following form
\begin{equation}\label{eq_BackSyst}
\dot y = h_1(y)+h_2(y)x\ ,\ \dot x= f(y,x) + g(y,x)u\ .
\end{equation}
with $y$ in $\RR^{n_y}$, $x$ in $\RR$ and $g(y,x)\neq 0$ for all $(y,x)$.

In this case, the first order approximation of the system is
\begin{equation}\label{eq_FirstOrdeBackstepping}
\AR = \left[
\begin{array}{cc}
H_1 & H_2\\
F_1 & F_2
\end{array}
\right]\ ,\ \BR  = \left[
\begin{array}{c}
0\\
G
\end{array}\right]\ ,
\end{equation}
with $H_1 = \frac{\partial h_1}{\partial y}(0)$, $H_2 = h_2(0)$, $F_1 = \frac{\partial f}{\partial y}(0,0)$, $F_2 = \frac{\partial f}{\partial x}(0,0)$, $G=g(0,0)$.

For this class of system we make the following assumption.
\begin{assumption}\label{ass_Backstepping}
For all   couples   $(K_y,P_y)$ with $K_y$ in $\RR^{n_y}$ and $P_y$ a positive definite matrix in $\RR^{n_y \times n_y}$ such that
$
P_y(H_1+H_2K_y) + (H_1+H_2K_y)'P_y<0\ ,
$
there exists a smooth function $V_y:\RR^{n_y}\rightarrow\RR_+$ such that
$H(V_y)(0)=2P_y$ and such that for all $y\neq 0$
\begin{equation}\label{ArtsteinConditiony}
L_{h_2}V_y(y)=0\Rightarrow L_{h_1}V_y(y)<0\ .
\end{equation}
\end{assumption}

With Theorem \ref{Theo_UnitingLyap}, this assumption establishes that  the stabilization with prescribed local behavior is satisfied for the $y$ subsystem seeing $x$ as the control input.

For this class of system, we have the following theorem
which can already be found in \cite{PanEzalKrenerKokotovic_TAC_00_LocOpt} when restricted to locally optimal controllers.
\begin{theorem}[Backstepping Case]\label{Theo_BackStepping}
Let Assumption \ref{ass_Backstepping} be satisfied.
Let $\Ko$ in $\RR^{p\times n}$ be a matrix such that $\AR + \BR\Ko$
is Hurwitz with $\AR$ and $\BR$ defined in (\ref{eq_FirstOrdeBackstepping}).
Then there exists a smooth function $\alphao:\RR^n\rightarrow\RR^p$ which solves the global asymptotic stabilization with prescribed local  behavior.
\end{theorem}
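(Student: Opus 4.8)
The goal is to build a control Lyapunov function $V$ on all of $\RR^n$ that (i) is locally quadratic with Hessian compatible with the prescribed $\Ko$ (so that Theorem~\ref{Theo_UnitingLyap} applies and delivers $\alphao$), and (ii) satisfies Artstein's condition globally. The structure of the system~\eqref{eq_BackSyst} is a strict-feedback (backstepping) form, so the natural approach is to construct $V$ by backstepping: first build a CLF $V_y$ for the $y$-subsystem with $x$ viewed as a virtual input, then add a penalty on the mismatch between $x$ and the desired virtual control. Assumption~\ref{ass_Backstepping} is exactly what guarantees that the first stage can be done with a prescribed local behavior.

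**Key steps.** First, I would extract from $\Ko$ the virtual feedback gain it induces on the $y$-subsystem. Write $\Ko=[\Koy\ \ \Kox]$ conformally with $\XR=(y,x)$; since $\AR+\BR\Ko$ is Hurwitz with $\BR=[0;G]^\top$ structure, the $(1,1)$-block $H_1+H_2 K_y^\star$ of the closed loop must itself be Hurwitz for an appropriate virtual gain $K_y^\star$ (this is a standard consequence of the block-triangular structure: the spectrum of $\AR+\BR\Ko$ is the union of $\mathrm{spec}(H_1+H_2 K_y^\star)$ and the $x$-dynamics eigenvalue, where $K_y^\star$ is read off from how $x$ enters $\dot y$ in closed loop). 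Then pick $P_y>0$ solving the corresponding algebraic Lyapunov inequality, and invoke Assumption~\ref{ass_Backstepping} to obtain the smooth $V_y$ with $H(V_y)(0)=2P_y$ satisfying \eqref{ArtsteinConditiony}. Second, I would define a desired virtual control $\phi(y)$ for $x$: locally $\phi(y)=K_y^\star y$, and globally $\phi$ is chosen so that $L_{h_1}V_y(y)+L_{h_2}V_y(y)\phi(y)<0$ for all $y\neq0$; existence of such a $\phi$ follows from Artstein's condition~\eqref{ArtsteinConditiony} together with Sontag's universal formula / small-control property, taking care to glue the Sontag choice to $K_y^\star y$ near the origin with a smooth cutoff (as in the $2)\Rightarrow1)$ part of Theorem~\ref{Theo_UnitingLyap}) so that the local linear behavior is exactly $K_y^\star$. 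Third, I would set $V(y,x)=V_y(y)+\tfrac{1}{2}\lambda (x-\phi(y))^2$ for a constant $\lambda>0$ to be fixed; this is $C^2$, proper (since $V_y$ is proper in $y$ and the quadratic term is proper in $x-\phi(y)$), positive definite. Fourth, compute $L_b V(y,x)=\lambda(x-\phi(y))g(y,x)$; since $g\neq0$ everywhere, $L_bV(y,x)=0$ forces $x=\phi(y)$, on which $L_aV=L_{h_1}V_y+L_{h_2}V_y\phi(y)<0$ for $y\neq0$ and $=0$ only at the origin — so Artstein's condition \eqref{ArtsteinCondition} holds for $V$. Fifth, I would check the local Hessian: $H(V)(0)$ in the $(y,y)$ block is $2P_y$, the $(x,x)$ block is $\lambda$, and the cross block is $-\lambda (\partial\phi/\partial y)(0)=-\lambda K_y^\star$; one must verify that \eqref{eq_Local_AlgebraicLyap} holds for this $P:=\tfrac12 H(V)(0)$ with the given $\Ko$. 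This is the standard backstepping Lyapunov computation at the linear level, and it holds for $\lambda$ sufficiently large (a Schur-complement / completion-of-squares argument). Finally, invoke Theorem~\ref{Theo_UnitingLyap}: $V$ satisfies both bullet points of statement~2, hence a locally Lipschitz $\alphao$ solving the problem exists; smoothness of $\alphao$ follows by taking the Sontag-type formula applied to this smooth $V$ and $b$, again glued to $\Ko\XR$ near the origin.

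**Main obstacle.** The delicate point is the simultaneous satisfaction of the two requirements on $V$: the \emph{global} Artstein condition forces $\phi$ to follow a Sontag-type universal formula far from the origin, while the \emph{local} Riccati/Lyapunov compatibility~\eqref{eq_Local_AlgebraicLyap} pins down $(\partial\phi/\partial y)(0)$ and the weight $\lambda$. One has to check that the prescribed $\Ko$ is genuinely "backstepping-compatible," i.e. that the linearized backstepping construction with virtual gain $K_y^\star$ and weight $\lambda$ can reproduce (in the sense of \eqref{eq_Local_AlgebraicLyap}, which is an inequality, not an equality) the stability certificate associated to $\AR+\BR\Ko$ — here the freedom in choosing $\lambda$ large and $P_y$ among all valid solutions is what saves us, but making this precise requires a careful linear-algebra lemma relating the block structure of $P$ to $(P_y,\lambda,K_y^\star)$. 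A secondary technical nuisance is ensuring the cutoff that transitions $\phi$ from $K_y^\star y$ to the Sontag formula preserves smoothness and does not destroy the sign of $L_{h_1}V_y+L_{h_2}V_y\phi$ in the transition annulus; this is handled exactly as in the proof of Theorem~\ref{Theo_UnitingLyap}, convexly combining the two valid choices.
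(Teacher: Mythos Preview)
Your overall architecture is exactly the paper's: build a backstepping CLF $V(y,x)=V_y(y)+c\,(x-\alpha_y(y))^2$, verify Artstein's condition via $g\neq0$, check the Hessian at the origin, and then invoke Theorem~\ref{Theo_UnitingLyap}. That part is fine.

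The gap is in your first and fifth steps, where you try to extract the subsystem data $(K_y^\star,P_y,\lambda)$ from the given $\Ko$. Your claim that $\AR+\BR\Ko$ has a block-triangular structure whose spectrum splits as $\mathrm{spec}(H_1+H_2K_y^\star)$ plus one $x$-eigenvalue is false: with $\Ko=[\Koy\ \Kox]$ the closed loop is
\[
\AR+\BR\Ko=\begin{bmatrix} H_1 & H_2 \\ F_1+G\Koy & F_2+G\Kox \end{bmatrix},
\]
which has both off-diagonal blocks nonzero in general, so no such spectral splitting is available and there is no canonical $K_y^\star$ to be ``read off''. Consequently your fifth step cannot be completed as stated: if you pass to coordinates $(y,z)=(y,x-K_y^\star y)$, the transformed closed loop has $(2,2)$ entry $M_{22}=F_2+G\Kox-K_y^\star H_2$, and negative definiteness of the candidate Lyapunov derivative for large $\lambda$ requires $M_{22}<0$, which is \emph{not} guaranteed for an arbitrarily chosen stabilizing pair $(K_y^\star,P_y)$. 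So ``take $\lambda$ large'' does not rescue the argument.

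The paper fixes this by reversing the direction of the extraction. Since $\AR+\BR\Ko$ is Hurwitz, pick first any $P>0$ satisfying \eqref{eq_Local_AlgebraicLyap} and partition it as $P=\begin{bmatrix}P_{11}&P_{12}\\P_{12}'&P_{22}\end{bmatrix}$. Pre- and post-multiplying \eqref{eq_Local_AlgebraicLyap} by $T'=\begin{bmatrix}\Id_{n_y}&-P_{12}/P_{22}\end{bmatrix}$ and $T$ (note $T'P\BR=0$, so $\Ko$ drops out) yields
\[
P_y\Big(H_1-H_2\tfrac{P_{12}'}{P_{22}}\Big)+\Big(H_1-H_2\tfrac{P_{12}'}{P_{22}}\Big)'P_y<0,\qquad P_y:=P_{11}-P_{12}P_{22}^{-1}P_{12}',
\]
so the correct virtual gain is $K_y=-P_{12}'/P_{22}$ and the correct weight is $P_{22}$, both read off from $P$ rather than from $\Ko$. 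With these choices Assumption~\ref{ass_Backstepping} supplies $V_y$ and $\alpha_y$ with $\partial\alpha_y/\partial y(0)=-P_{12}'/P_{22}$, and then $V=V_y(y)+P_{22}(x-\alpha_y(y))^2$ has $H(V)(0)=2P$ \emph{exactly}, so \eqref{eq_Local_AlgebraicLyap} holds by construction with no asymptotic-in-$\lambda$ argument needed. If you replace your Steps~1 and~5 by this Schur-complement extraction, the rest of your outline goes through verbatim.
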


\if\longversion1

\begin{proof}
Let $P$ be a positive definite matrix such that the algebraic Lyapunov inequality (\ref{eq_Local_AlgebraicLyap}) is satisfied.
This matrix can be rewritten $P=\left[\begin{array}{cc}
P_{11} & P_{12}\\P_{12}' & P_{22}\end{array}\right]$
with $P_{22}, P_{12}, P_{22}$  matrices respectively in $\RR^{n_y\times n_y}, \RR^{n_y\times n}, \RR$.
Let $T$ be the matrix in $\RR^{(n_y+1)\times n_y}$ defined as\footnote{Given a positive integer $n$, the notation $\Id_n$ is the identity matrix in $\RR^{n\times n}$.}
$
T = \left[
\begin{array}{c}
\Id_{n_y}\\[0.1em]-\frac{P_{12}'}{P_{22}}
\end{array} \right]\ .
$
Note that this matrix satisfies
$
T' P =
\left[\begin{array}{cc}P_y & 0\end{array}\right]\ ,\ T'P\BR = 0\ ,
$
where $P_y = P_{11}-P_{12}P_{22}^{-1}P_{12}'$ is the Schur complement of $P$.

By pre and post multiplying inequality (\ref{eq_Local_AlgebraicLyap}) respectively by $T'$ and $T$ it yields the following inequality.
\begin{equation}\label{eq_Lyap_SubSystem}
P_y
\left(H_1-H_2 \frac{P_{12}'}{P_{22}}\right) +
\left(H_1-H_2 \frac{P_{12}'}{P_{22}}\right)'P_y<0\ .
\end{equation}
The matrix $P$ being positive definite, its Schur complement $P_y$ is also positive definite.
Hence, inequality (\ref{eq_Lyap_SubSystem}) can be seen as a Lyapunov inequality and $x=-\frac{P_{12}}{P_{22}}y$ as a stabilizing local controller for the $y$ subsystem with $P_y$ as associated Lyapunov matrix.
With Assumption \ref{ass_Backstepping}, and Theorem \ref{Theo_UnitingLyap} we know there exist a smooth function $\alpha_y:\RR^{n_y}\rightarrow\RR$ and a smooth function $V_y:\RR^{n_y}\rightarrow\RR_+$ such that the following two properties hold.
\begin{itemize}
\item The origin of the system $\dot y = h_1(y)+h_2(y)\alpha_y(y)$ is globally and asymptotically stable with associated Lyapunov function $V_y$. More precisely, we have
\begin{equation}\label{Backstepping1}
\frac{\partial V_y}{\partial y}(y)\left[h_1(y)+h_2(y)\alpha_y(y)\right]<0\ ,\ \forall y\neq 0\ ;
\end{equation}
\item We have the local properties
$
\frac{\partial \alpha_y}{\partial y}(0) = -\frac{P_{12}}{P_{22}}\ ,\ H(V_y)(0) = 2P_y\ .
$
\end{itemize}
Consider now the function
\begin{equation}
V(\XR) =  V_y(y)+P_{22}(x-\alpha_y(y))^2\ .
\end{equation}
Note that this function is proper and positive definite.
Moreover, we have
$
L_bV(\XR)=2P_{22}(x-\alpha_y(y))g(x,y)\ .
$
Moreover, since it is assumed that $g(x,y)\neq 0$, this implies
$ L_bV(\XR)=0, \XR\neq 0 \Rightarrow x=\alpha_y(y)\ .$
Note that when $x=\alpha_y(y)$, with (\ref{Backstepping1}) we have for all $y\neq 0$
$
L_aV(\XR)= \frac{\partial V_y}{\partial y}(y)h(y,\alpha_y(y))<0\ .
$
Hence,   Equation \eqref{ArtsteinCondition}   is satisfied.
Finally, we have the following equality.
$
H(V)(0) = 2P\ .
$
Hence, with Theorems \ref{Theo_UnitingLyap} we get the result.
\end{proof}

\fi

Note that with Theorem \ref{Theo_LocOptStab}, this theorem establishes that given $Q$, a positive definite matrix in $\RR^{n_y\times n_y}$, and $R$,  a positive real number, then there exist $q$, $r$ and   $\alphao$ which is solution to an optimal control problem with cost $J(\XR,u)$ defined in (\ref{eq_cost}),
with $q$ and $r$ which satisfy (\ref{eq_LocOpt}).
In other words we can
design a globally and asymptotically stabilizing optimal control law with prescribed local cost function as already seen in \cite{PanEzalKrenerKokotovic_TAC_00_LocOpt}.

\subsection{Feedforward form}

Following our previous work in \cite{BenachourEtAl_CDC_11}, consider the case in which the system with state $\XR=(y,x)$ can be written in the form
\begin{equation}\label{eq_ForwardingSyst}
\dot y = h(x)\ ,\ \dot x= f(x) + g(x)u\ ,
\end{equation}
with $y$ in $\RR$, $x$ in $\RR^{n_x}$.
Note that to oppose to what has been done in the previous subsection, now the state component $y$ is a scalar and $x$ is a vector.
Note moreover that the functions $h$, $f$ and $g$ do not depend of $y$.
This restriction on $h$ has been partially removed in our recent work in \cite{BenachourEtAl_TAC_13}.

The first order approximation of the system is denoted by
\begin{equation}\label{eq_FirstOrdeForwarding}
\AR = \left[
\begin{array}{cc}
0 & H\\
0 & F
\end{array}
\right]\ ,\ \BR  = \left[
\begin{array}{c}
0\\
G
\end{array}\right]\ ,
\end{equation}
with $H = \frac{\partial h}{\partial x}(0)$, $F = \frac{\partial f}{\partial x}(0)$, $G=g(0)$.

For this class of system we make the following assumption.
\begin{assumption}\label{ass_Forwarding}
For all   couples   $(K_x,P_x)$ with $K_x$ in $\RR^{p\times n_x}$ and $P_x$ a positive definite matrix in $\RR^{n_x \times n_x}$ such that
$
P_x(F+GK_x) + (F+GK_x)'P_x<0\ ,
$
there exists a smooth function $V_x:\RR^{n_x}\rightarrow\RR_+$ such that
$H(V_x)(0)=2P_x$ and such that for all $x\neq 0$
\begin{equation}\label{ArtsteinConditionx}
L_{g}V_x(x)=0\Rightarrow L_{f}V_x(x)<0\ .
\end{equation}
%
%
\end{assumption}

This assumption establishes that the stabilization with prescribed local behavior is satisfied for the $x$ subsystem.
With this Assumption we have the following theorem   whose   proof can be found in \cite{BenachourEtAl_CDC_11}.
\begin{theorem}[Forwarding Case]\label{Theo_Forwarding}
Let Assumption \ref{ass_Forwarding} be satisfied.
Let $\Ko$ in $\RR^{p\times n}$ be a vector such that  the matrix
$\AR + \BR\Ko$ is Hurwitz with $\AR$ and $\BR$ defined in (\ref{eq_FirstOrdeForwarding}).
Then there exists a smooth function $\alphao:\RR^n\rightarrow\RR^p$ which solves the global asymptotic stabilization with prescribed local  behavior.
\end{theorem}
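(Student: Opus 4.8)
The plan is to mimic the strategy of Theorem~\ref{Theo_BackStepping}: reduce the full stabilization problem to the one on the $x$-subsystem (which is handled by Assumption~\ref{ass_Forwarding}), construct an explicit control Lyapunov function $V$ for the full system in the form of a $y$-part plus a cross term, verify the Artstein condition~\eqref{ArtsteinCondition} for $V$, check that $H(V)(0)=2P$ for a suitable $P>0$ satisfying~\eqref{eq_Local_AlgebraicLyap}, and then invoke Theorem~\ref{Theo_UnitingLyap}. The key structural feature of the feedforward form~\eqref{eq_ForwardingSyst} is that $\dot y = h(x)$ depends only on $x$, and $h(0)=0$ with $H=\frac{\partial h}{\partial x}(0)$; this is what makes a ``forwarding''-type Lyapunov construction possible. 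First I would pick a positive definite $P$ solving~\eqref{eq_Local_AlgebraicLyap} for $\AR+\BR\Ko$ with $\AR,\BR$ as in~\eqref{eq_FirstOrdeForwarding}, partition it as $P=\left[\begin{smallmatrix}P_{yy}&P_{yx}\\P_{yx}'&P_{xx}\end{smallmatrix}\right]$, and extract from it an appropriate ``reduced'' matrix $P_x>0$ and local gain $K_x$ for the $x$-subsystem (as in the backstepping proof, via a Schur-type change of variables adapted to the block structure, using that $\BR$ has the form $\left[\begin{smallmatrix}0\\G\end{smallmatrix}\right]$). Assumption~\ref{ass_Forwarding} then furnishes a smooth $V_x:\RR^{n_x}\to\RR_+$ with $H(V_x)(0)=2P_x$ satisfying the subsystem Artstein condition~\eqref{ArtsteinConditionx}, and an associated local feedback with first-order gain $K_x$.

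Next I would build the full Lyapunov function. Since $\dot y=h(x)$, a natural candidate is
\begin{equation}
V(\XR)= V_x(x) + \lambda\, \Psi\!\left(y - M(x)\right)
\end{equation}
for a suitably chosen scalar $\lambda>0$, a positive definite penalty $\Psi$ (e.g.\ quadratic near $0$), and a ``cross term'' $M:\RR^{n_x}\to\RR$ designed so that $\frac{\partial M}{\partial x}(x)\,\big(f(x)+g(x)\alpha_x(x)\big)$ cancels $h(x)$ to leading order, i.e.\ $M$ solves (at least locally, and asymptotically globally) a first-order PDE of the forwarding type $L_{f+g\alpha_x}M(x) = h(x)$; such an $M$ exists by the standard forwarding lemma because the $x$-subsystem is GAS under $\alpha_x$ and $h$ vanishes at the origin. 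Then $\dot y - \dot M = h(x) - L_{f+g\alpha_x}M(x) \to 0$ fast enough along the $x$-dynamics, so the term $\lambda\Psi(y-M(x))$ contributes a nonpositive derivative when $u=\alpha_x(x)$, and along the coupled system $\dot V$ can be made negative for $\XR\neq0$, away from the set where $L_bV=0$. I would then verify the Artstein implication: because $\BR$ and $b(\XR)$ act only on the $x$-component and $V_x$ already satisfies~\eqref{ArtsteinConditionx}, the points where $L_bV(\XR)=0$ force $L_gV_x(x)=0$ together with a vanishing condition on the cross term, from which $L_aV(\XR)<0$ follows using~\eqref{ArtsteinConditionx} and the choice of $M$.

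Finally I would check the local data: by construction $H(V)(0) = H(V_x)(0)$ padded with the Hessian of the cross term; choosing $\Psi$ quadratic near $0$ and $\lambda$, together with the derivative $\frac{\partial M}{\partial x}(0)$ determined by the linear equation $\frac{\partial M}{\partial x}(0)(F+GK_x) = H$, one arranges $H(V)(0)=2P$, so~\eqref{eq_Local_AlgebraicLyap} holds by the choice of $P$. Then Theorem~\ref{Theo_UnitingLyap} (statement $2)\Rightarrow1)$) yields the desired smooth $\alphao$ solving the global asymptotic stabilization with prescribed local behavior. The main obstacle I anticipate is twofold: first, getting the block decomposition of $P$ to simultaneously deliver a valid $(K_x,P_x)$ for the subsystem \emph{and} match the prescribed local gain $\Ko$ and Hessian (the feedforward structure, with $y$ scalar and the upper-left block of $\AR$ zero, is more delicate than the backstepping case); and second, establishing the \emph{global} existence and regularity of the cross term $M$ and the estimate that $h(x)-L_{f+g\alpha_x}M(x)$ decays fast enough relative to a class-$\mathcal{K}$ bound on $\Psi$, so that $\dot V<0$ holds globally rather than only locally — this is exactly the technical heart of forwarding and is where I would lean on the constructions of \cite{BenachourEtAl_CDC_11}.
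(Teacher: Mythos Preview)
The paper does not prove Theorem~\ref{Theo_Forwarding} here and defers to \cite{BenachourEtAl_CDC_11}; your architecture (Schur-type reduction to the $x$-subsystem, Assumption~\ref{ass_Forwarding} for $V_x$, a forwarding cross term $y-M(x)$, then Theorem~\ref{Theo_UnitingLyap}) is indeed the right one and matches that reference. Two steps, however, are not yet correct. Your Artstein verification is wrong as written: with $V=V_x(x)+\lambda\Psi(y-M(x))$ one has $L_bV=L_gV_x-\lambda\Psi'(y-M)\,L_gM$, so $L_bV=0$ does \emph{not} force $L_gV_x=0$. The right argument uses the exact PDE $L_{f+g\alpha_x}M=h$ (not merely an ``asymptotic'' one): on $\{L_bV=0\}$ a substitution yields $L_aV=L_{f+g\alpha_x}V_x(x)<0$ for $x\neq0$, while at $x=0$, $y\neq0$ one must instead show $L_bV\neq0$, i.e.\ $\frac{\partial M}{\partial x}(0)\,G\neq0$, which follows from stabilizability of $(\AR,\BR)$ via the PBH test at the eigenvalue~$0$.

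More substantively, ``one arranges $H(V)(0)=2P$'' glosses over the main difficulty. In backstepping the reconstructed $V$ automatically has Hessian $2P$ because $\frac{\partial\alpha_y}{\partial y}(0)$ was itself extracted from $P$. In forwarding the off-diagonal block of $\tfrac12 H(V)(0)$ equals $-P_{yy}\,\frac{\partial M}{\partial x}(0)$ with $\frac{\partial M}{\partial x}(0)=H(F+GK_x)^{-1}$ dictated by the PDE; for an \emph{arbitrary} $P$ solving~\eqref{eq_Local_AlgebraicLyap} there is no reason this equals $P_{yx}$, so you cannot start from a generic $P$ and recover it. The resolution is to reverse the order: choose $(K_x,P_x,P_{yy})$ first (in terms of $\Ko$), build $M$ and $V$, set $P:=\tfrac12 H(V)(0)$, and then \emph{verify} that this specific $P$ satisfies~\eqref{eq_Local_AlgebraicLyap} for the prescribed $\Ko$. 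That verification, carried out in the decoupling coordinates $\tilde y=y-\frac{\partial M}{\partial x}(0)\,x$, is where the work actually lies; your proposal stops just before it.
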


Similarly to the backstepping case  this theorem with Theorem \ref{Theo_LocOptStab} establish that given $Q$, a positive definite matrix in $\RR^{n\times n}$, and $R$,  a positive real number, there exists $q$, $r$ and  $\alphao$ which is solution to an optimal control problem with cost $J(\XR,u)$ defined in (\ref{eq_cost}),
with $q$ and $r$ which satisfy (\ref{eq_LocOpt}).
Consequently, similarly to the backstepping case, we can
design a globally and asymptotically stabilizing optimal control law with prescribed local cost function.

\section{Illustration on the orbital transfer problem}
As an illustration of the results described in the previous sections, we consider the problem of designing a control law which ensures the orbital transfer of a satellite from one orbit to another.
In this section we consider the approach developed in \cite{KellettPraly_NOLCOS_04_ContThrustOrbTransfer} where a bounded stabilizing control law was developed.
More precisely, we study the class of optimal control law (in the LQ sense) that can be synthesized.
This may be of interest since, as mentioned in \cite{Bombrun_PhD_07transferts}, it is difficult to consider performance issues with this control law.

  Consider the example presented in \cite{KellettPraly_NOLCOS_04_ContThrustOrbTransfer}. Applying a suitable coordinate change
it yields
\begin{equation}\label{eq_SystemTransfOrbit}
\left\{
\begin{array}{rcl}
\dot { \XR}_1&=& \overline{\eta} \sqrt{\XR_4}(1+\XR_2)^2
- \eta
- \frac{\nu}{\sqrt{p_0^3}} \frac{\XR_6\sqrt{\XR_4^3}}{1+ \XR_2}u_h\\[0.5em]
\dot { \XR}_2&=&- \eta (1+ \XR_2)^2 \XR_3
\\[0.5em]
\dot { \XR}_3&=& \eta [(1+ \XR_2)^2 \left[\frac{\XR_4}{p_0}(1+\XR_2)-1\right]+ \nu  u_r
\\[0.5em]
\dot \XR_4 &=& 2 \frac{\nu}{\sqrt{p_0^3}} \frac{\sqrt{\XR_4^5}}{1+ \XR_2}u_\theta\\[0.5em]
\dot \XR_5&=& \overline{\eta} \sqrt{\XR_4}(1+\XR_2)^2\XR_6+
 \overline{\nu} \frac{1+\XR_5^2-\XR_6^2}{2\sqrt{\XR_4}(1+ \XR_2)} u_h  \\
\dot \XR_6 &=& - \overline{\eta} \sqrt{\XR_4}(1+\XR_2)^2\XR_5+ \overline{\nu} \frac{\XR_5\XR_6}{\sqrt{\XR_4}(1+ \XR_2)} u_h,
\end{array}
\right.
\end{equation}
  where $p_0$,
\begin{equation*}
    \begin{array}{rclcrcl}
        \nu&=&\sqrt{\frac{p_0}{\mu}},&&\eta&=&\frac{1}{p_0\nu},\\
        \overline{\nu}&=&\nu\sqrt{p_0},&&\overline{\eta}&=&\frac{\eta}{\sqrt{p_0}},
    \end{array}
\end{equation*}
are constants values. Concerning the states, in this new coordinate system, $\chi_1$ is the true longitude, $\chi_2$ and $\chi_3$ are the $x$ and $y$ components of the eccentricity vector, $\chi_4$ is the parameter, $\chi_5$ and $\chi_6$ are the $x$ and $y$ components of the momentum vector.

In compact form, the previous system is simply:
$
\dot \XR = a(\XR) + b_r(\XR)u_r + b_\theta(\XR)u_\theta+ b_h(\XR)u_h\ .
$
The first order approximation of this system around the equilibrium is given as
$
\AR =  \eta
\left[
\begin{array}{cccccc}
0 & 2&0&\frac{1}{2}&0&0\\
0 &0& 1&0&0&0\\
0 & 1&0&\frac{1}{p_0}&0&0\\
0&0&0&0&0&0\\
0 & 0&0&0&0&1\\
0 & 0&0&0&-1&0
\end{array}
\right]\
$
and
$
\BR =  \nu
\left[
\begin{array}{ccc}
0&0&0\\
0&0 &0\\
1&0 &0\\
0&2p_0&0\\
0&0&\frac{1}{2} \\
0&0&0
\end{array}
\right].
$
Note that these matrices can be rewritten as
$
\AR =\texttt{diag}\{\tilde \AR, A_1\}\ ,\
\tilde \AR =\left[
\begin{array}{cc}
A_0 & A_2 \\
0_{13} &0
\end{array}
\right]
$
and
$
\BR =\texttt{diag}\{\tilde \BR, B_2\}\ ,\ \tilde\BR =
\left[
\begin{array}{cc}
B_0 & 0_{31}\\
0 &  \frac{2}{\eta}
\end{array}
\right]
$
where
$
A_0 =   \eta
\left[
\begin{array}{ccc}
  0   & 2&0\\
0 &0& 1\\
0&1&0
\end{array}
\right]\ ,\
A_1 = \eta
\left[
\begin{array}{cc}
0& 1 \\
- 1  &0
\end{array}
\right]\ ,
$
and,
$
A_2 =  \eta
\left[
\begin{array}{c}
\frac{1}{2} \\
0\\
\frac{1}{p_0}
\end{array}
\right]\ ,
B_0 =  \nu
\left[
\begin{array}{c}
0\\
0\\
1
\end{array}
\right]
\ ,
\
B_2 =  \nu
\left[
\begin{array}{c}
\frac{1}{2}\\
0
\end{array}
\right]\ .
$

The control strategy developed in \cite{KellettPraly_NOLCOS_04_ContThrustOrbTransfer} was to successively apply backstepping, forwarding and dissipativity properties.

With the tools developed in the previous sections, we are able to solve the locally optimal control problem for a specific class of quadratic costs as described by the following theorem.
\begin{theorem}[Locally optimal stabilizing control law]
Given $Q_0$ a positive definite matrix in $\RR^{3\times 3}$ and $R_0$ in $\RR_+$.
Let $P_0$ be the solution of the (partial) algebraic   Riccati   equation:
\begin{equation}\label{eq_PartialAlgRiccati}
A_0P_0 + P_0A_0 - P_0B_0R_0^{-1}B_0'P_0 = -Q_0\ .
\end{equation}
Then for all  positive real numbers $R_0,R_1,R_2,\rho_1, \rho_2$ such that the matrix
$
Q = \texttt{diag}\{\tilde Q,\rho_2^2 B_2R_2^{-1}B_2'\}\ ,\ \tilde Q = \left[\begin{array}{cc}
Q_0 & P_0A_2 \\
A_2'P_0 &  \frac{4}{\eta^2} \rho_1^2 R_1^{-1}
\end{array}\right]
$
is positive,
there exists $q$ and $r$ and a globally asymptotically stabilizing control law  $(u_r,u_\theta,u_h) = \alphao(\XR)$ which is solution to an optimal control problem with cost $J(\XR;u)$ defined in (\ref{eq_cost}),
with $q$ and $r$ which satisfy (\ref{eq_LocOpt}).
\end{theorem}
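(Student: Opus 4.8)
The plan is to apply Theorem~\ref{Theo_LocOptStab} with $R:=\texttt{diag}\{R_0,R_1,R_2\}$ as the weight on $u=(u_r,u_\theta,u_h)$ and the given $Q$. It then suffices to produce a $C^2$, proper, positive definite function $V:\RR^6\rightarrow\RR_+$ such that $P:=H(V)(0)$ is positive definite and solves $P\AR+\AR'P-P\BR R^{-1}\BR'P+Q=0$ and such that Artstein's condition \eqref{ArtsteinCondition} holds for \eqref{eq_SystemTransfOrbit}: Theorem~\ref{Theo_LocOptStab} would then return $q$ and $r$ with $H(q)(0)=2Q$, $r(0)=R$ and $V$ solving the HJB equation, and $\alphao(\XR)=-\frac{1}{2}r(\XR)^{-1}L_bV(\XR)'$ would be globally asymptotically stabilizing and optimal for \eqref{eq_cost}. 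One point needs care: since $\rho_2^2B_2R_2^{-1}B_2'$ has rank one, $Q$ is only positive semidefinite, with the $\XR_6$-axis as its single singular direction.

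First I would build $P$, exploiting that $\AR=\texttt{diag}\{\tilde\AR,A_1\}$, $\BR=\texttt{diag}\{\tilde\BR,B_2\}$, $R=\texttt{diag}\{\tilde R,R_2\}$ with $\tilde R=\texttt{diag}\{R_0,R_1\}$, and $Q=\texttt{diag}\{\tilde Q,\rho_2^2B_2R_2^{-1}B_2'\}$: the Riccati equation splits, so I look for $P=\texttt{diag}\{\tilde P,P_{56}\}$. On the oscillator block, using $A_1'=-A_1$, I would check that $P_{56}=\rho_2\Id_2$ solves $P_{56}A_1+A_1'P_{56}-P_{56}B_2R_2^{-1}B_2'P_{56}+\rho_2^2B_2R_2^{-1}B_2'=0$ and makes $A_1-B_2R_2^{-1}B_2'P_{56}$ Hurwitz (negative trace, positive determinant). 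On the first block $(\tilde\AR,\tilde\BR)$ is controllable and $\tilde Q$ is positive by assumption, so the stabilizing solution $\tilde P$ exists and is positive definite; the off-diagonal block $P_0A_2$ and the corner $\frac{4}{\eta^2}\rho_1^2R_1^{-1}$ of $\tilde Q$, fixed through the partial Riccati equation \eqref{eq_PartialAlgRiccati}, are precisely what make $\tilde P$ compatible with the cascade Lyapunov function below and make \eqref{eq_LocOpt} reproduce the LQ cost attached to $(Q_0,R_0)$ on the $(\XR_1,\XR_2,\XR_3)$ chain. The gain $\Ko=-R^{-1}\BR'P$ renders $\AR+\BR\Ko$ Hurwitz.

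Next I would build $V$ by treating \eqref{eq_SystemTransfOrbit} as a cascade, following \cite{KellettPraly_NOLCOS_04_ContThrustOrbTransfer}. The chain $(\XR_1,\XR_2,\XR_3)$ driven by $u_r$ is in strict feedback form, so the construction in the proof of Theorem~\ref{Theo_BackStepping} yields a CLF with prescribable quadratic part satisfying \eqref{ArtsteinCondition}; the integrator $\XR_4$ driven by $u_\theta$ and feeding into the chain is appended by a forwarding step as in Theorem~\ref{Theo_Forwarding}, giving $\tilde V$ on the $(\XR_1,\XR_2,\XR_3,\XR_4)$ block with the prescribed Hessian. The oscillator $(\XR_5,\XR_6)$ driven by $u_h$ is the delicate piece: its drift conserves $\XR_5^2+\XR_6^2$ to leading order, so the quadratic $\rho_2(\XR_5^2+\XR_6^2)$ \emph{fails} \eqref{ArtsteinCondition} on $\{\XR_5=0\}$; following the dissipativity argument of \cite{KellettPraly_NOLCOS_04_ContThrustOrbTransfer} I would add higher-order cross terms, leaving the Hessian unchanged, that rotate the gradient off the $u_h$ channel and restore \eqref{ArtsteinCondition}. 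Combining $\tilde V$ with this modified oscillator function, together with further higher-order terms to absorb the interconnections ($\XR_2,\XR_4$ into the $\XR_5,\XR_6$ equations, $\XR_6$ into $\dot\XR_1$), and matching the local behavior to $P$ through the uniting procedure of \cite{AndrieuPrieur_TAC_10} as in Theorem~\ref{Theo_UnitingLyap}, would give a $C^2$, proper, positive definite $V$ with $H(V)(0)=P$ satisfying \eqref{ArtsteinCondition} on $\RR^6\setminus\{0\}$, at which point Theorem~\ref{Theo_LocOptStab} closes the argument.

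The hard part will be the oscillator block, and with it the positive semidefiniteness of $Q$. That semidefiniteness enters only in the step of Theorem~\ref{Theo_LocOptStab} that needs $q(\XR)=-L_aV(\XR)+\frac{1}{4}L_bV(\XR)r(\XR)^{-1}L_bV(\XR)'>0$ for $\XR\neq0$ near the origin: to second order $q$ agrees with $\XR'Q\XR$, which vanishes along the $\XR_6$-axis, but there the dissipativity cross terms of $V$ are exactly what make $L_aV<0$ strict, so $q>0$ survives in a punctured neighborhood of the origin and hence, by the compactness argument of Theorem~\ref{Theo_LocOptStab} on the level sets of $V$, on all of $\RR^6\setminus\{0\}$. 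So the real work is to carry out the dissipativity construction on $(\XR_5,\XR_6)$ and to track the cross-couplings carefully enough that $V$ remains a global CLF with Hessian exactly $P$.
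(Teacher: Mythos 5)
Your opening steps (the block-diagonal $P$ with $P_{56}=\rho_2\Id_2$, the backstepping/forwarding structure of the $(\XR_1,\XR_2,\XR_3)$ chain and the $\XR_4$ integrator) agree with the paper, but the architecture diverges at the decisive point and the divergence is a genuine gap. You propose to invoke Theorem~\ref{Theo_LocOptStab} on the full six-dimensional system, which requires a proper positive definite $V$ on $\RR^6$ satisfying the strict Artstein condition \eqref{ArtsteinCondition} at \emph{every} nonzero point, with prescribed Hessian block $\rho_2\Id_2$ on $(\XR_5,\XR_6)$. You correctly note that $\rho_2(\XR_5^2+\XR_6^2)$ fails this (the $(\XR_5,\XR_6)$ drift is a rotation, so it conserves $\XR_5^2+\XR_6^2$ exactly, not just to leading order), but your repair --- ``add higher-order cross terms that rotate the gradient off the $u_h$ channel'' --- is exactly the hard part and is never carried out. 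It is also not what \cite{KellettPraly_NOLCOS_04_ContThrustOrbTransfer} supplies: their dissipativity argument produces a \emph{weak} Lyapunov function and concludes by LaSalle invariance, not a strict CLF. Note moreover that at $(\XR_5,\XR_6)=(0,\pm1)$ the $u_h$ vector field has zero component on the $(\XR_5,\XR_6)$ equations, so making \eqref{ArtsteinCondition} strict globally in that block is genuinely delicate; and even granting such a $V$, Theorem~\ref{Theo_LocOptStab} is stated for positive definite $Q$, so with the rank-one block $\rho_2^2B_2R_2^{-1}B_2'$ you would additionally need a strengthened version whose proof you only sketch (your appeal to the compactness argument inside Theorem~\ref{Theo_LocOptStab} again presupposes the global strict Artstein property). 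Both the inverse-optimality claim and the global asymptotic stability claim of your write-up therefore rest on an unconstructed object.

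The paper never asks for a strict CLF on $\RR^6$. It applies Theorem~\ref{Theo_UnitingLyap} with the backstepping and forwarding theorems only to the $(\XR_1,\XR_2,\XR_3)$ subsystem at $\XR_4=p_0$, getting $V_0$ with $H(V_0)(0)=2P_0$; it then adds $\rho_1(p_0-\XR_4)^2$, verifies Artstein's condition for the four-dimensional subsystem with inputs $(u_r,u_\theta)$ (using that $L_{b_\theta}\tilde V=0$ forces $\XR_4=p_0$), and invokes Theorem~\ref{Theo_LocOptStab} \emph{there} to obtain $\tilde q,\tilde r$ and the corresponding HJB identity with Hessian data $(\tilde Q,\tilde R)$. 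For the oscillator it simply sets $V=\tilde V+\rho_2(\XR_5^2+\XR_6^2)$ and defines $q=\tilde q+\frac14 R_2^{-1}(L_{b_h}V)^2$ and $r=\texttt{diag}\{\tilde r,R_2\}$ by hand; because the $(\XR_5,\XR_6)$ drift conserves $\XR_5^2+\XR_6^2$, the six-dimensional HJB equation \eqref{eq_HJB} holds with this merely positive \emph{semi}definite $q$, and $H(q)(0)=2Q$, $r(0)=R$ follow by direct computation. The optimal feedback \eqref{eq_OptCont} then makes $V$ a weak Lyapunov function (its derivative vanishes, e.g., on the set where $(\XR_1,\XR_2,\XR_3,\XR_4-p_0)=0$ and $(\XR_5,\XR_6)\neq0$), and global asymptotic stability is recovered via LaSalle's invariance principle following \cite{KellettPraly_NOLCOS_04_ContThrustOrbTransfer}. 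To salvage your route you would have to actually construct a strict CLF with the prescribed Hessian for the coupled $(\XR_5,\XR_6)$ dynamics and prove a semidefinite-$Q$ variant of Theorem~\ref{Theo_LocOptStab}; as written, the proposal defers precisely these steps.
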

\begin{proof}
First of all, when $u_\theta=u_h=0$ and when $\XR_4=p_0$, then the dynamics of the $(\XR_1, \XR_2,\XR_3)$ subsystem satisfies
\begin{equation}\label{eq_PartialSystem}
\left\{
\begin{array}{rcl}
\dot {\XR}_1&=& \eta \left[(1+\XR_2)^2-1\right]\\
\dot {\XR}_2&=&- \eta (1+ \XR_2)^2\XR_3\\
\dot {\XR}_3&=& \eta (1+ \XR_2)^2\XR_2+ \nu  u_r.
\end{array}
\right.
\end{equation}

It can be noticed setting $y:=\XR_3$ and $x:=\XR_2$ the $(\XR_2,\XR_3)$ subsystem is in the strict feedback form (\ref{eq_BackSyst}). Note that employing Theorem \ref{Theo_BackStepping}, it yields that for this system all locally stabilizing linear behaviors can be achieved.

Moreover, setting $y:=\XR_1$ and $x:=(\XR_2,\XR_3)$ the $(\XR_1,\XR_2,\XR_3)$ subsystem is in the feedforward form (\ref{eq_ForwardingSyst}). Note that employing Theorem \ref{Theo_Forwarding}, it yields that for this system all locally stabilizing linear behaviors can be achieved.

Hence, with Theorem \ref{Theo_UnitingLyap}, it yields that given $P_0$ which by (\ref{eq_PartialAlgRiccati}) is a CLF for the first order approximation of the system (\ref{eq_PartialSystem}) there exists a smooth function $V_0:\RR^3\rightarrow\RR_+$ such that
\begin{itemize}
\item $V_0$ is a CLF for the  $(\XR_1,\XR_2,\XR_3)$ subsystem when considering the control $u_r$ and when $\XR_4=p_0$, i.e. for the system (\ref{eq_PartialSystem})\ ;
\item $V_0$ is locally quadratic and satisfies $H(V_0)(0) = 2P_0$\ .
\end{itemize}

Let $\tilde V:\RR^4\rightarrow\RR_+$ be the function defined by
$
\tilde V(\XR_1,\XR_2,\XR_3,\XR_4) = V_0(\XR_1,\XR_2,\XR_3) +   V_1(\XR_4)\ ,
$
with $V_1(\XR_4)= \rho_1(p_0-\XR_4)^2$.
Note that this function is such that
$
H(\tilde V)(0,0,0,p_0) = 2\tilde P\ ,\ \tilde P = \texttt{diag}\left\{P_0,\rho_1\right\}\ .
$
Employing (\ref{eq_PartialAlgRiccati}), it can be checked that $\tilde P$ satisfies the (partial) algebraic   Ricatti
$
\tilde P\tilde \AR+\tilde \AR'\tilde P-\tilde P\tilde\BR\tilde R^{-1}\tilde \BR'\tilde P + \tilde Q=0\ ,
$
with $\tilde R=\texttt{diag}\{R_1,R_2\}$.
We will show that this function is also a control Lyapunov function when considering the $(\XR_1,\XR_2,\XR_3,\XR_4)$ subsystem in (\ref{eq_SystemTransfOrbit}) with the control inputs $u_r$ and $u_\theta$.
Consider the set of point in $\RR^4$ such that $L_{b_r}\tilde V(\XR) = L_{b_\theta}\tilde V(\XR) =0$.
Note that $L_{b_\theta}\tilde V(\XR) =0$ implies that $\XR_4 = p_0$.
With the CLF property for the system (\ref{eq_PartialSystem}), it yields that in this set $L_aV_0(\XR) <0 $ for all $(\XR_1,\XR_2,\XR_3)\neq 0$.
Consequently, $L_a(\tilde V)(\XR) <0 $ for all $(\XR_1,\XR_2,\XR_3, \XR_4-p_0)\neq 0$ such that $L_{b_r}\tilde V(\XR) = L_{b_\theta}\tilde V(\XR) =0$.
Hence with Theorem \ref{Theo_LocOptStab} we get the existence of $\tilde q:\RR^4\rightarrow\RR_+$ a continuous function, $C^2$ at zero and $\tilde r$ a continuous function which values $r(\XR)$ are symmetric positive definite matrices
such that:
\begin{itemize}
\item The function $\tilde q$ and $\tilde r$ satisfy the following property
\begin{equation}\label{eq_LocOptPartial}
H(\tilde q)(0,0,0,p_0)=2\tilde Q\ ,\ r(0,0,0,p_0)=\tilde R\ .
\end{equation}
\item The function $\tilde V$ is a value function associated to the cost (\ref{eq_cost}) with $\tilde q$ and $\tilde r$. More precisely, $\tilde V$ satisfies the HJB equation (\ref{eq_HJB}) when considering the  $(\XR_1,\XR_2,\XR_3,\XR_4)$ subsystem in (\ref{eq_SystemTransfOrbit}).
\end{itemize}

Finally, let $V:\RR^6\rightarrow\RR_+$ be defined by
$
V(\XR) = \tilde V(\XR_1,\XR_2,\XR_3,\XR_4) + V_2(\XR_5, \XR_6)\ ,
$
with $V_2(\XR_5, \XR_6)=\rho_2 (\XR_5^2 + \XR_6^2)$.
Moreover, consider $q$ the positive semi definite function $q$ defined as
$
q(\XR) = \tilde q(\XR_1,\XR_2,\XR_3,\XR_4) + \frac{1}{4}(L_{b_r}V(\XR))^2R_2^{-1}\ ,
$
and $r$ defined as
$
r(\XR) = \texttt{diag}\{\tilde r(\XR),R_2\}\ .
$
Note that the following properties are satisfied.
\begin{itemize}
\item The function $q$ and $r$ satisfy
\begin{equation}
H(\tilde q)(0)=2Q\ ,\ r(0)=\texttt{diag}\{R_1,R_2,R_3\}\ ;
\end{equation}
\item The function $ V$ is a value function associated to the cost (\ref{eq_cost}) with $q$ and $r$.
\end{itemize}
Hence, the control law (\ref{eq_OptCont}) makes the time derivative of the Lyapunov function $V$ nondecreasing and is also optimal with respect to cost defined from $q$ and $r$.
Note however that we get a weak Lyapunov function, i.e.,   a proper positive definite function whose derivative in direction of the vector field describing \eqref{eq_SystemTransfOrbit} is negative semi-definite.   Nevertheless, following \cite{KellettPraly_NOLCOS_04_ContThrustOrbTransfer}, it can be shown that employing this Lyapunov function in combination with LaSalle invariance principle, global asymptotic stabilization of the origin of the system (\ref{eq_SystemTransfOrbit}) with the control law (\ref{eq_OptCont}) is obtained.
\end{proof}

\section{Conclusion}

In this article we have developed a theory for constructing control laws having a predetermined local behavior.
In a first step, we showed that this problem can be rewritten as an equivalent problem in terms of control Lyapunov functions.
In a second step we have demonstrated that when the local behavior comes from an (LQ) optimal approach, we can characterize a cost with specific local approximation that can be minimized.
Finally, we have introduced two classes of system for which we know how to build these locally optimal control laws.

All this theory has been illustrated on the problem of orbital transfer.

\if\longversion1
\bibliographystyle{plain}
\fi
%
\bibliography{BibVA}

\end{document}